\newtheorem{lemma}{Lemma}
\newtheorem{corollary}[lemma]{Corollary}
\newtheorem*{Theorem}{Theorem}
\theoremstyle{definition}
\theoremstyle{remark} 
\newtheorem*{remark}{Remark}
\newcommand{\C}{\mathbb{C}}
\newcommand{\G}{\mathbb{G}}
\newcommand{\Q}{\mathbb{Q}}
\newcommand{\cA}{\mathcal{A}}
\newcommand{\cC}{\mathcal{C}}
\newcommand{\cE}{\mathcal{E}}
\newcommand{\cF}{\mathcal{F}}
\newcommand{\cH}{\mathcal{H}}
\newcommand{\cM}{\mathcal{M}}
\newcommand{\cT}{\mathcal{T}}
\newcommand{\cU}{\mathcal{U}}
\DeclareMathOperator{\Hom}{Hom}
\author{Vivek Shende}
\title{The weights of the tautological classes of character varieties}
\def\blfootnote{\gdef\@thefnmark{}\@footnotetext}
\begin{document}

\begin{abstract} I calculate the weights of the tautological classes of character varieties using the  
functorial mixed Hodge structure on simplicial schemes. \end{abstract}

\maketitle

\blfootnote{UC Berkeley, Dept. of Mathematics; 970 Evans Hall, Berkeley CA 94720-3840 USA.}
\blfootnote{{\tt vivek@math.berkeley.edu}}

Let $X$ be a topological space of finite type and let $G$ be a linear algebraic group over $\C$.  We write $$Loc_G(X) = \Hom(\Pi_1(X), G)/G$$ 
for the stack of locally constant principal $G$-bundles on $X$.  It is algebraic of finite type since $\Pi_1(X)$ is finitely generated.  There is a tautological $G$-bundle
on $ Loc_G(X) \times X$, given by taking the tautological flat bundle and forgetting the flat structure.  This induces a classifying map 
$$Loc_G(X) \times X \to BG$$

Passing to cohomology, we get a map $\mathrm{H}^*(BG, \Q) \to \mathrm{H}^*(Loc_G(X), \Q) \otimes \mathrm{H}^*(X, \Q)$.  Taking the Kunneth
components of the image classes on the first factor give the {\em tautological classes} of $ \mathrm{H}^*(Loc_G(X), \Q)$.  Given
$C \in \mathrm{H}^*(X, \Q)^\vee$ and $\xi \in \mathrm{H}^*(BG, \Q)$, we write $\int_C \xi$ for the corresponding tautological class; of course
in the case of usual interest when $X$ is a manifold, $C$ can be taken to be a cycle and the integral is an integral. 

$Loc_G(X)$ is an Artin stack over $\C$, which can be viewed as a simplicial scheme over $\C$.\footnote{For a detailed explanation of how to interpret Artin stacks as simplicial schemes -- 
ie., how
to define sheaves, and in particular Betti cohomology, on the former via the latter -- see \cite{O}; for a more general treatment in the context of higher stacks see \cite{P}.    
For the reader, we recall in the appendix a little bit of the basic language of simplicial schemes and explain in particular how to view $Loc$ as a simplicial scheme.} 
The Betti cohomology of a complex simplicial scheme carries
a mixed Hodge structure \cite[Def. 8.3.4]{D3}.  
The data of this structure is two filtrations: an increasing weight filtration $W_\bullet$ on the
cohomology with coefficients in $\Q$, 
and a decreasing Hodge filtration $F^\bullet$ on the  cohomology with coefficients in $\C$.
We are particularly interested in the Hodge classes
$$ {}^k\mathrm{Hdg}^{d}(Z) := F^k \mathrm{H}^d(Z, \C) \cap \overline{F}^k \mathrm{H}^d(Z, \C) \cap W_{2k} \mathrm{H}^d(Z, \Q)$$
The key property of Hodge structures is that any maps on cohomology induced by algebraic maps preserve both  filtrations \cite[Prop 8.3.9]{D3}, hence in particular $ {}^k\mathrm{Hdg}^{*}$.  

Deligne showed  $\mathrm{H}^*(BG, \Q) = \bigoplus {}^{k} \mathrm{Hdg}^{2k}(BG)$; in particular,
this Hodge structure is pure \cite[Thm. 9.1.1]{D3}.
However, the classifying map above
{\em is not algebraic} in the second factor, and so need not preserve Hodge structures.  In fact, it does not: for $G = \mathrm{PGL}_r$, and 
$\Sigma$ a closed orientable 2-manifold, and $\xi = c_k(\mathcal{T})$ the Chern class of the tautological bundle, 
Hausel and Rodriguez-Villegas showed \cite[Prop. 4.1.8]{HRV}:

\addtocounter{equation}{-1}

\begin{equation} \label{eq:pt} \int\limits_{point} c_k(\mathcal{T}) \in {}^{k} \mathrm{Hdg}^{2k}(Loc_{\mathrm{PGL_r}}(\Sigma))  \end{equation}
\begin{equation} \label{eq:curve} \int\limits_{curve} c_k(\mathcal{T}) \in {}^{k} \mathrm{Hdg}^{2k-1}(Loc_{\mathrm{PGL_r}}(\Sigma)) \end{equation}

They moreover showed for $k=2$ and conjectured in general
 that \cite[Rem. 4.1.9]{HRV}:

\begin{equation} \label{eq:surface} \int_{\Sigma} c_k(\mathcal{T}) \in {}^{k} \mathrm{Hdg}^{2k-2}(Loc_{\mathrm{PGL_r}}(\Sigma)) \end{equation}

The purpose of this note is to prove (\ref{eq:surface}), and in fact the analogous result for any $X, G, \xi, C$:

\begin{Theorem}
For $C \in \mathrm{H}^i(X, \Q)^\vee$ and $\xi \in \mathrm{H}^{2k}(BG, \Q)$, we have 
$$\int_C \xi \in {}^{k} \mathrm{Hdg}^{2k-i} (Loc_G(X))$$ 
\end{Theorem}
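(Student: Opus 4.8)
The plan is to circumvent the non-algebraicity in the $X$-direction by replacing $X$ with the classifying space $B\Pi_1(X)$ of its fundamental groupoid, realized as a simplicial scheme over $\C$ all of whose terms are $0$-dimensional. The canonical map $f\colon X \to B\Pi_1(X)$ is only continuous, but the tautological $G$-bundle on $Loc_G(X)\times X$ is the associated bundle of the universal $\Pi_1(X)$-bundle along the universal representation, hence is pulled back along $\mathrm{id}\times f$ from a tautological $G$-bundle on $Loc_G(X)\times B\Pi_1(X)$. This lets me factor the classifying map as
$$Loc_G(X)\times X \xrightarrow{\ \mathrm{id}\times f\ } Loc_G(X)\times B\Pi_1(X) \xrightarrow{\ c'\ } BG,$$
where now $c'$ \emph{is} a morphism of simplicial schemes over $\C$: it is assembled from the evaluation morphisms $\ev_\gamma\colon \Hom(\Pi_1(X),G)\to G$, which are algebraic for each fixed arrow $\gamma$, together with the bar construction. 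Therefore $c'$ preserves the mixed Hodge structure, and since $\xi \in {}^{k}\mathrm{Hdg}^{2k}(BG)$ by Deligne's purity result, its pullback $c'^*\xi$ lies in ${}^{k}\mathrm{Hdg}^{2k}(Loc_G(X)\times B\Pi_1(X))$.

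The key computation is that $\mathrm{H}^*(B\Pi_1(X))$ is pure of weight $0$ and Hodge type $(0,0)$. This holds because the terms of the simplicial scheme $B\Pi_1(X)$ are $0$-dimensional: the weight spectral sequence has $E_1^{p,q}=\mathrm{H}^q$ of the $p$-th term, which vanishes for $q>0$ and is a sum of copies of $\Q(0)$ for $q=0$, so $\mathrm{H}^n(B\Pi_1(X))$ is computed by a complex of pure weight-$0$, type-$(0,0)$ groups and inherits that type. Granting this, I would invoke the Künneth isomorphism, compatible with mixed Hodge structures, to decompose $c'^*\xi$ in degree $2k$. Because tensoring with the pure weight-$0$, type-$(0,0)$ space $\mathrm{H}^i(B\Pi_1(X))$ shifts neither the weight filtration nor the Hodge type — one checks $F^k(A\otimes B)=F^kA\otimes B$, $\overline F^k(A\otimes B)=\overline F^kA\otimes B$, and $W_{2k}(A\otimes B)=W_{2k}A\otimes B$ when $B$ is pure of type $(0,0)$ and weight $0$ — the Künneth component of $c'^*\xi$ in $\mathrm{H}^{2k-i}(Loc_G(X))\otimes \mathrm{H}^i(B\Pi_1(X))$ lies in ${}^{k}\mathrm{Hdg}^{2k-i}(Loc_G(X))\otimes \mathrm{H}^i(B\Pi_1(X))$. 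Writing this component as $\sum_j \alpha_j\otimes\beta_j$, every $\alpha_j$ then lies in ${}^{k}\mathrm{Hdg}^{2k-i}(Loc_G(X))$.

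To conclude I would transport back to $X$ and pair with $C$. Applying $\mathrm{id}\otimes f^*$ sends the component above to $\sum_j \alpha_j\otimes f^*\beta_j$, which is the Künneth component of $c^*\xi$ in $\mathrm{H}^{2k-i}(Loc_G(X))\otimes \mathrm{H}^i(X)$; pairing the second factor against $C\in\mathrm{H}^i(X,\Q)^\vee$ gives $\int_C\xi=\sum_j \langle C, f^*\beta_j\rangle\,\alpha_j$. Since the coefficients $\langle C, f^*\beta_j\rangle$ are scalars, each $\alpha_j\in{}^{k}\mathrm{Hdg}^{2k-i}(Loc_G(X))$, and this subspace is cut out by the linear conditions $F^k$, $\overline F^k$, $W_{2k}$, the class $\int_C\xi$ lands in ${}^{k}\mathrm{Hdg}^{2k-i}(Loc_G(X))$, as claimed.

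I expect the main obstacle to be the first step: making the factorization through $B\Pi_1(X)$ precise as a diagram of simplicial schemes and verifying that $c'$ is genuinely algebraic, i.e. that the tautological $G$-bundle descends to $Loc_G(X)\times B\Pi_1(X)$ and that the associated-bundle construction is realized by an honest simplicial morphism over $\C$. Once the geometry is packaged correctly — which is precisely the reason for interpreting $Loc_G(X)$ and $BG$ as simplicial schemes — the Hodge-theoretic content reduces to the purity of $\mathrm{H}^*(B\Pi_1(X))$ and the multiplicativity of weights and Hodge types under Künneth, both of which are formal.
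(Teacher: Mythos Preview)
Your approach is essentially the paper's: replace $X$ by a constant simplicial scheme with $0$-dimensional levels, observe that the evaluation map to $BG$ is then an algebraic morphism of simplicial schemes and hence preserves mixed Hodge structures, and use that the cohomology of a discrete simplicial scheme is pure of weight $0$ and type $(0,0)$ so that slant product against $C$ drops the cohomological degree without touching the Hodge data. The only difference is the choice of model: the paper takes a \emph{finite} simplicial set $\Delta_X$ with $|\Delta_X|\simeq X$, whereas you take the nerve $B\Pi_1(X)$; the paper's choice is slightly cleaner because its levels are finite-type schemes even when $\pi_1(X)$ is infinite (so Deligne's framework in \cite{D3} applies verbatim) and because $\mathrm{H}^*(\Delta_X)=\mathrm{H}^*(X)$, making your extra pullback along $f$ unnecessary---but the idea is the same.
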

\begin{proof}
Let $\Delta_X$ be a simplicial set with geometric realization homotopic to $X$.  View it as a constant simplicial scheme. 

There is an algebraic morphism $\underline{\Hom}_{sschemes}(\Delta_X, BG) \xrightarrow{\sim} Loc_G(X)$, where 
the LHS is the internal hom in the category of simplicial schemes.  It is a homotopy equivalence. 
We recall the description of this map in the appendix; a proof that it is an 
equivalence can be found here: \cite[Lem. 2.2.6.3]{HAG2}.  In that reference they 
prove a stronger result accounting in addition for the derived structure, 
but it specializes to the present statement. 

There is an evaluation map of simplicial schemes
$$\underline{\Hom}_{sschemes}(\Delta_X, BG) \times \Delta_X \to BG$$ 
Its algebraicity is guaranteed by the existence of internal hom in the category of simplicial schemes,
since it corresponds to the identity under the identification 
$$  \Hom_{ssch.}(\underline{\Hom}_{ssch.}(\Delta_X, BG), 
\underline{\Hom}_{ssch.}(\Delta_X, BG)) = 
\Hom_{ssch.}(\underline{\Hom}_{ssch.}(\Delta_X, BG) \times \Delta_X, BG)$$

Simplicial schemes carry mixed Hodge structures, functorial with respect to algebraic maps 
\cite[Def. 8.3.4, Prop 8.3.9]{D3}. Thus the
induced map
$$\mathrm{H}^*(BG, \Q) \to \mathrm{H}^*(\underline{\Hom}_{sschemes}(\Delta_X, BG ), \Q) \otimes \mathrm{H}^*(\Delta_X, \Q)$$
respects the mixed Hodge structure.  $\mathrm{H}^*(\Delta_X, \Q)$ is $\mathrm{H}^*(X,\Q)$ as a graded vector space, but 
carries the mixed Hodge structure in which everything has weight zero.   Thus $\int_C$ decreases the cohomological degree
by $\deg C$ but does not change the Hodge degrees. 
\end{proof}

%

\begin{remark}
A similar statement about Hodge degrees holds with $BG$ replaced by any simplicial scheme.  However the fact that the 
mapping simplicial scheme $\underline{\Hom}_{sschemes}(\Delta_X, BG )$ is a 1-stack is special to $BG$. 
\end{remark}

Our interest in these classes is largely driven by various conjectures on the cohomology 
of $GL_n$ character varieties of surfaces which were formulated in \cite{HRV} and \cite{dCHM}. 
As the actual character variety is singular or stacky depending on one's viewpoint, the cleanest
conjectures have been made in the related setting of variants called twisted character varieties. 
In \cite[Sec. 2.2]{HRV} it is explained how to use the honest $PGL_n$ character variety to understand
the twisted versions, but for completeness, we recall the relevant definitions and manipulations. 

If $\Sigma$ is a topological surface of genus $g$ with one puncture, then its 
fundamental group is free on $2g$ generators.  The $GL_n$ representations of this are then just 
$2g$-tuples of invertible matrices.  Taking the monodromy around the puncture gives a 
map $(GL_n)^{2g} \to SL_n$; we denote $\cU_n$ 
the preimage of the scalar matrix whose entries are some primitive $n$-th root of unity.  The 
conjugation action of $GL_n$ on $\cU_n$ 
factors through a free action of $PGL_n$, and the resulting quotient space is 
smooth but noncompact, and is called the {\em twisted $GL_n$ character variety} $\cM_n$.  The above facts hold verbatim replacing $GL_n$ by $SL_n$; giving
the twisted $SL_n$ character variety $\cM'_n$.   Finally we can divide
out by the scaling action on each matrix: 
$\widetilde{\cM}_n := \cM_n / (\G_m)^{2g} = \cM'_n /(\mu_n)^{2g}$.  This last object, the
twisted $PGL_n$ character variety, is in fact contained in what we have
been calling $Loc_{PGL_n}(\Sigma)$: the scalar matrix we were demanding 
to be the monodromy around the puncture becomes the identity in $PGL_n$, so we are to 
begin with parameterizing $PGL_n$ representations of $\pi_1(\Sigma)$, and then we have
divided out by $PGL_n$ conjugation.  One can show it is a connected component. 
To be completely precise, in \cite{HRV}, $\widetilde{\cM}_n$
was viewed as a variety with orbifold singularities, and the corresponding component in 
$Loc_{PGL_n}(\Sigma)$ is a smooth Deligne-Mumford stack, but they have
the same rational cohomology. 

Thus the tautological classes we have been studying on $Loc_{PGL_n}(\Sigma)$ pull
back via 
$$\cM'_n \hookrightarrow \cM_n \to \widetilde{\cM}_n \hookrightarrow 
Loc_{PGL_n}(\Sigma)$$
to the classes used in \cite{HRV, dCHM} to study these twisted character varieties. 
We recall that, because $\cM_n = (\cM'_n \times (\G_m)^{2g})/\mu_n^{2g}$, and because
$\cM'_n/ \mu_n^{2g} = \widetilde{\cM}_n$ and $\mu_n^{2g}$ acts cohomologically trivially
on $(\G_m)^{2g}$, we have 
$\mathrm{H}^*(\cM_n; \Q) \cong 
\mathrm{H}^*(\widetilde{\cM}_n; \Q) \otimes \mathrm{H}^*((\G_m)^{2g}; \Q)$. 

The importance of the tautological classes comes from a result of Markman: 
the tautological classes, together with the
$\mathrm{H}^*((\G_m)^{2g}; \Q)$, generate the cohomology of $\cM_n$; or equivalently,
the tautological classes generate the cohomology of $\widetilde{\cM}_n$
\cite[Sec. 4]{M}.

More precisely, let us choose variables $\alpha_k, \phi_{k, i}, \beta_k$, where
$k = 2, \ldots, n$ and $i = 1, \ldots, 2g$.  
Choosing a basis $\{\gamma_i\}$ of $\mathrm{H}^1(\Sigma, \Q)$
determines a ring homomorphism
\begin{eqnarray*} 
\Q[\alpha_k, \phi_{k,i}, \beta_k] & \to & \mathrm{H}^*(\widetilde{\cM}_n, \Q) \\
\alpha_k & \mapsto & \int_\Sigma c_k(\cT) \in {}^k \mathrm{Hdg}^{2k-2}(\widetilde{\cM}_n)  \\
\phi_{k, i} & \mapsto & \int_{\gamma_i} c_k(\cT) 
 \in {}^k \mathrm{Hdg}^{2k-1}(\widetilde{\cM}_n)\\
\beta_k & \mapsto & \int_{pt} c_k(\cT)  \in {}^k \mathrm{Hdg}^{2k}(\widetilde{\cM}_n)
\end{eqnarray*}
Markman's theorem implies that this morphism is surjective.  The next natural task
is to study the relations.  These are known explicitly for $n = 2$ \cite{HT}, but in general 
remain mysterious.  

\begin{corollary}
The inclusion
$\bigoplus {}^k \mathrm{Hdg}^d(\widetilde{\cM}_n) \subset \mathrm{H}^{*}(\widetilde{\cM}_n, \Q)$
is an equality.  The same holds for $\cM_n$.
\end{corollary}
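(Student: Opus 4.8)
The plan is to combine three facts: that the ring generators $\alpha_k,\phi_{k,i},\beta_k$ of $\mathrm{H}^*(\widetilde{\cM}_n,\Q)$ land in the Hodge classes ${}^k\mathrm{Hdg}^*$ (which is exactly the content of the Theorem, specialized as recorded just above), that Markman's theorem makes the map out of $\Q[\alpha_k,\phi_{k,i},\beta_k]$ surjective, and that $\bigoplus_{k,d}{}^k\mathrm{Hdg}^d$ is closed under cup product. Granting the last point, every class of $\widetilde{\cM}_n$ is a polynomial in Hodge classes, each monomial is again a Hodge class, and so the inclusion is forced to be an equality for $\widetilde{\cM}_n$.

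The only substantive step is therefore to verify that the Hodge classes form a subring. First I would invoke that the cup product $\mathrm{H}^{d_1}\otimes\mathrm{H}^{d_2}\to\mathrm{H}^{d_1+d_2}$ is a morphism of mixed Hodge structures \cite{D3}; concretely this says the three filtrations are multiplicative, $F^{k_1}\cup F^{k_2}\subset F^{k_1+k_2}$, the same for $\overline{F}^{\bullet}$, and $W_{m_1}\cup W_{m_2}\subset W_{m_1+m_2}$. From these it is immediate that the product of $a\in{}^{k_1}\mathrm{Hdg}^{d_1}$ with $b\in{}^{k_2}\mathrm{Hdg}^{d_2}$ lies in ${}^{k_1+k_2}\mathrm{Hdg}^{d_1+d_2}$, since then $a\cup b$ lies in $F^{k_1+k_2}\cap\overline{F}^{k_1+k_2}\cap W_{2(k_1+k_2)}$.

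For $\cM_n$ I would pass to the isomorphism $\mathrm{H}^*(\cM_n;\Q)\cong\mathrm{H}^*(\widetilde{\cM}_n;\Q)\otimes\mathrm{H}^*((\G_m)^{2g};\Q)$ recalled above, which is an isomorphism of mixed Hodge structures as it is built from the K\"unneth formula together with passage to invariants under the finite group $\mu_n^{2g}$ acting algebraically. The second factor consists of Hodge classes because $\mathrm{H}^1(\G_m;\Q)=\Q(-1)\subset{}^1\mathrm{Hdg}^1$ and $\mathrm{H}^*((\G_m)^{2g};\Q)$ is generated in degree one, while the first factor is all Hodge by the preceding step; since products of Hodge classes remain Hodge (now for the K\"unneth tensor-product structure, where the same three multiplicativity properties hold), the whole of $\mathrm{H}^*(\cM_n;\Q)$ is spanned by Hodge classes. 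The one point I expect to need care is precisely this multiplicativity of the weight and Hodge filtrations under cup and K\"unneth products for simplicial schemes, but it is part of Deligne's construction in \cite{D3} and requires no new argument.
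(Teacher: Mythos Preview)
Your proposal is correct and follows essentially the same approach as the paper's proof: both combine the Theorem's placement of the tautological generators in the Hodge classes with Markman's surjectivity, and both handle $\cM_n$ via the K\"unneth splitting together with $\mathrm{H}^*(\G_m,\Q)=\mathrm{Hdg}(\G_m)$. The paper simply asserts that the tautological classes lie in the ``Hodge subring'' without further comment, whereas you spell out the closure of $\bigoplus{}^k\mathrm{Hdg}^d$ under cup product from the multiplicativity of $F^\bullet$, $\overline{F}^\bullet$, and $W_\bullet$; this is a welcome elaboration but not a different argument.
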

\begin{proof}
We saw that the tautological classes lie in the Hodge subring, and Markman showed
that they generate the cohomology.  To deduce the statement for $\cM_n$, it suffices to 
note that $\mathrm{H}^*(\G_m, \Q) = \mathrm{Hdg}(\G_m)$. 
\end{proof}

\begin{corollary}
Assign bigradings $\deg(\alpha_k) = (k, 2k-2)$ and
$\deg(\phi_{k,i}) = (k, 2k-1)$ and $\deg(\beta_k) = (k, 2k)$
in order to view $\Q[\alpha_k, \phi_{k,i}, \beta_k] $ as a bigraded ring. 
On the other hand, view $\mathrm{H}^*(\widetilde{\cM}_n, \Q) = \bigoplus 
{}^k \mathrm{Hdg}^j (\widetilde{\cM}_n)$ as a bigraded ring with bigrading $(k,j)$.  

Then the surjective morphism
$\Q[\alpha_k, \phi_{k,i}, \beta_k] \to \mathrm{H}^*(\widetilde{\cM}_n, \Q)$ is 
a bigraded ring morphism.  In particular, its kernel -- the relations between the 
tautological generators -- is generated by bihomogenous elements. 
\end{corollary}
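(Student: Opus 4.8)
\emph{The plan.} I would derive the corollary entirely from the Theorem together with the preceding corollary, adding only one structural observation: that the assignment $(k,j)\mapsto {}^k\mathrm{Hdg}^j(\widetilde{\cM}_n)$ makes $\mathrm{H}^*(\widetilde{\cM}_n,\Q)$ into a \emph{bigraded ring}. The preceding corollary already guarantees that $\mathrm{H}^*(\widetilde{\cM}_n,\Q)=\bigoplus_{k,j}{}^k\mathrm{Hdg}^j(\widetilde{\cM}_n)$ is an honest internal direct sum indexed by bidegrees, so all that remains for the ring structure is multiplicativity of the bigrading, namely the inclusion ${}^k\mathrm{Hdg}^j\cdot{}^{k'}\mathrm{Hdg}^{j'}\subseteq{}^{k+k'}\mathrm{Hdg}^{j+j'}$.

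First I would establish this multiplicativity. The second index is immediate, since cohomological degrees add under cup product. For the first index I invoke that the cup product $\mathrm{H}^j\otimes\mathrm{H}^{j'}\to\mathrm{H}^{j+j'}$ is a morphism of mixed Hodge structures \cite[Prop. 8.3.9]{D3}, so that each filtration is multiplicative: $F^k\cdot F^{k'}\subseteq F^{k+k'}$, the same for $\overline{F}$, and $W_{2k}\cdot W_{2k'}\subseteq W_{2(k+k')}$. Intersecting these three containments, the product of a class in ${}^k\mathrm{Hdg}^j=F^k\cap\overline{F}^k\cap W_{2k}$ with one in ${}^{k'}\mathrm{Hdg}^{j'}$ lands in $F^{k+k'}\cap\overline{F}^{k+k'}\cap W_{2(k+k')}$ in degree $j+j'$, which is exactly ${}^{k+k'}\mathrm{Hdg}^{j+j'}$. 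This is the one place where actual Hodge theory enters, but it is short once cup product is known to respect the structure.

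Next I would check the map on generators and conclude. The Theorem records precisely that $\alpha_k,\phi_{k,i},\beta_k$ are sent into ${}^k\mathrm{Hdg}^{2k-2}$, ${}^k\mathrm{Hdg}^{2k-1}$, ${}^k\mathrm{Hdg}^{2k}$, that is, into the target summands of bidegrees $(k,2k-2)$, $(k,2k-1)$, $(k,2k)$ --- the very bidegrees assigned to these variables. Since the source is the polynomial ring freely bigraded by those assignments, a bihomogeneous element of bidegree $(a,b)$ is a linear combination of monomials each of total bidegree $(a,b)$, and by the multiplicativity just proved each such monomial maps into ${}^a\mathrm{Hdg}^b$. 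Hence the morphism carries the bidegree-$(a,b)$ part of the source into ${}^a\mathrm{Hdg}^b$, which is what it means to be a bigraded ring homomorphism. Finally, for the kernel: writing any $f=\sum_{(a,b)}f_{(a,b)}$ in bihomogeneous components, its image has $(a,b)$-component lying in the distinct summand ${}^a\mathrm{Hdg}^b$, so $f$ lies in the kernel if and only if every $f_{(a,b)}$ does; the kernel is therefore a bigraded ideal and so is generated by bihomogeneous elements. The step I expect to need the most care is not any of these formal manipulations but the reliance on the preceding corollary that the target is a genuine direct sum over $(k,j)$: it is this that lets the index $k$ separate cohomology classes, and without it the two gradings would not be independent and the argument would not even get started.
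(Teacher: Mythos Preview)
Your proposal is correct and is exactly the argument implicit in the paper, which states this corollary without an explicit proof: you use the preceding corollary to make the target a genuine bigraded vector space, the multiplicativity of the Hodge and weight filtrations under cup product to upgrade this to a bigraded ring, and the Theorem to place each generator in its assigned bidegree. There is nothing to add; your care in isolating the one nontrivial input (the direct sum decomposition from the preceding corollary) is well placed.
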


Hausel and Rodriguez-Villegas formulated a structural conjecture 
about the action of $\alpha_2$ \cite[Conj. 4.2.7]{HRV}.  They wrote 
in terms of the associated graded of the weight filtration, but
it can be formulated in other ways:

\begin{corollary} \label{cor:equiv}
Let $\alpha := \int_{\Sigma} c_2(\cT) \in {}^2 \mathrm{Hdg}^2$, and let 
$d = \dim\, \widetilde{\cM}_n$.  Then the following formulations of
the ``curious hard Lefschetz'' conjecture are equivalent: 
\begin{itemize}
\item After passing to associated graded with respect to the weight filtration:
$$\alpha^s \cup \cdot : Gr_{d-2s}^W \mathrm{H}^{*-s}(\cM_n, \C) 
 \xrightarrow{\sim}Gr_{d+2s}^W \mathrm{H}^{*+s}(\cM_n, \C)  $$
\item After passing to associated graded with respect to the Hodge filtration: 
$$\alpha^s \cup \cdot : Gr^{d/2-s}_F \mathrm{H}^{*-s}(\cM_n, \C) 
 \xrightarrow{\sim}Gr^{d/2+s}_F \mathrm{H}^{*+s}(\cM_n, \C)  $$
That is, the Hodge filtration $F$ is a shift of the Deligne-Jacobsen-Morozov
filtration of $\alpha \cup$. 
\item In terms of the bigrading:
$$\alpha^s \cup \cdot : {}^{d/2-s} \mathrm{Hdg}^{*-s} ( \widetilde{\cM}_n)
 \xrightarrow{\sim} {}^{d/2+s} \mathrm{Hdg}^{*+s} ( \widetilde{\cM}_n)$$
\end{itemize}
\end{corollary}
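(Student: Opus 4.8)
The plan is to collapse all three statements to a single linear-algebra assertion about the bigraded pieces ${}^k\mathrm{Hdg}^j$, using the purity established in the preceding Corollary: on both $\cM_n$ and $\widetilde{\cM}_n$ the cohomology is of Hodge--Tate type, $\mathrm{H}^j = \bigoplus_k {}^k\mathrm{Hdg}^j$ with each ${}^k\mathrm{Hdg}^j$ a sum of copies of $\Q(-k)$.

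First I would record the two splittings this purity forces. Because $\Q(-k)$ is pure of weight $2k$, the weight filtration splits with $Gr^W_{2k}\mathrm{H}^j = {}^k\mathrm{Hdg}^j$ and $Gr^W_{2k+1} = 0$; because $\Q(-k)$ has Hodge type $(k,k)$, the Hodge filtration splits with $Gr^k_F\mathrm{H}^j(\C) = {}^k\mathrm{Hdg}^j\otimes\C$. Hence, over $\C$, the three functors $Gr^W_{2k}$, $Gr^k_F$ and ${}^k\mathrm{Hdg}$ all produce the same bigraded vector space, and under the substitutions $2k = d\mp 2s$, $k = d/2\mp s$ the source and target spaces displayed in the three items coincide term by term.

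Next I would check the operator matches. The class $\alpha\in{}^2\mathrm{Hdg}^2$ is a morphism $\Q(-2)\to\mathrm{H}^2$ of mixed Hodge structures, so cup product with it is a morphism of mixed Hodge structures $\mathrm{H}^{\bullet}(-2)\to\mathrm{H}^{\bullet+2}$; as such it is strictly compatible with $W$ and $F$, and therefore induces one and the same map on $Gr^W$, on $Gr_F$, and on ${}^\bullet\mathrm{Hdg}$. Since being an isomorphism of such a strict morphism is detected on either associated graded, this map is an isomorphism in one guise exactly when it is in the others. This already yields the equivalence of the three formulations on any single fixed space, in particular the equivalence of the weight and Hodge formulations on $\cM_n$.

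The remaining, and I expect the only substantive, point is to connect the statements on $\cM_n$ with the statement on $\widetilde{\cM}_n$. Here I would use the decomposition $\mathrm{H}^*(\cM_n) = \mathrm{H}^*(\widetilde{\cM}_n)\otimes\mathrm{H}^*((\G_m)^{2g})$ recalled above, under which $\alpha$ is pulled back from $\widetilde{\cM}_n$ and so acts as $(\alpha\cup)\otimes\mathrm{id}$, the second factor being itself Hodge--Tate with $\mathrm{H}^b((\G_m)^{2g})$ pure of type $(b,b)$. The main obstacle is exactly this passage: tensoring with the fixed factor $\mathrm{H}^*((\G_m)^{2g})$ shifts the weight and Hodge indices by the torus weights, so one must verify that the curious hard Lefschetz isomorphisms centred at $d$ on $\cM_n$ correspond precisely to those centred at $d$ on $\widetilde{\cM}_n$. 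Controlling this index bookkeeping, keeping track of how the weights contributed by the torus interact with the Lefschetz centre, is where the real care is needed, and it is the step I would write out in full detail; the preceding reductions are formal consequences of Hodge--Tate purity and the strictness of morphisms of mixed Hodge structures.
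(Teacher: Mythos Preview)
The paper does not supply a proof of this corollary; it is stated as an immediate consequence of the preceding corollary (the Hodge--Tate splitting of $\mathrm{H}^*$), with the remark that it already follows from results in \cite{HRV}. Your plan is exactly the intended one: once $\mathrm{H}^j=\bigoplus_k{}^k\mathrm{Hdg}^j$ with each summand pure of type $(k,k)$, the three functors $Gr^W_{2k}$, $Gr^k_F$, ${}^k\mathrm{Hdg}$ coincide and the strictness of $\alpha\cup$ as a morphism of mixed Hodge structures makes the three displayed maps literally the same map on the same spaces. That part of your argument is complete and correct.

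You are also right that the only point with content is the passage between $\cM_n$ and $\widetilde{\cM}_n$, and your diagnosis of the obstacle is accurate: under $\mathrm{H}^*(\cM_n)\cong\mathrm{H}^*(\widetilde{\cM}_n)\otimes\mathrm{H}^*((\G_m)^{2g})$ with $\alpha$ acting as $(\alpha\cup)\otimes\mathrm{id}$, the torus factor contributes classes of Hodge type $(b,b)$ for $b=0,\dots,2g$, so the Lefschetz statement on $\cM_n$ centred at $d$ unpacks as a family of Lefschetz statements on $\widetilde{\cM}_n$ centred at $d-2b$. Getting from this family back to the single statement centred at $d$ (and vice versa) is precisely the bookkeeping you flag; it is not a formality of Hodge--Tate purity alone, and when you write it out you should expect to use more than strictness --- either the correct normalization of the centre relative to $\dim\cM_n$ versus $\dim\widetilde{\cM}_n$, or an $\mathfrak{sl}_2$ argument showing that the various centred statements are simultaneously true or false for this particular nilpotent. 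The paper gives no guidance here beyond pointing to \cite{HRV}.
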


\begin{remark}
Corollary 1 and Corollary 3 can be obtained from results already in \cite{HRV}. 
Corollary 2 is new.
\end{remark}

This is called a ``curious'' Hard Lefschetz because it is taking place on an affine
variety, does not involve an ample class, and is symmetric in Hodge gradings rather than the 
homological grading.  
From the point of view of the character variety,
it is completely mysterious why such a symmetry should exist. 

Recently, de Cataldo, Hausel, and Migliorini formulated a very natural
conjecture to explain this mystery \cite{dCHM}.  There is  
a ``nonabelian Hodge'' diffeomorphism \cite{Cor, Don, Hit1, Sim} between the twisted character variety 
$\cM_n$ and Hitchin's moduli space of rank $n$ Higgs bundles $\cH_n$ on some 
complex curve with underlying surface $\Sigma$, giving in
particular a canonical identification $\mathrm{H}^*(\cM_n; \Q) = \mathrm{H}^*(\cH_n; \Q)$. 
The space $\cH_n$
admits the structure of a complex integrable system \cite{H}, and in particular carries a proper morphism
to an affine space $h: \cH_n \to \cA_n$.  This map gives rise to a perverse Leray filtration
 $P_k\mathrm{H}^*(\cH_n; \Q)$, and the ``P = W'' conjecture asserts that 
 $P_k\mathrm{H}^*(\cH_n; \Q) = W_{2k} \mathrm{H}^*(\cM_n; \Q)$.  There are likewise
 versions for the $SL(n)$ and the $PGL(n)$ cases.  The relation to the 
 curious Hard Lefschetz conjecture is that the relative Hard Lefschetz theorem \cite{BBD}
 guarantees
 $$\alpha^s \cup \cdot : Gr_{d/2-s}^P \mathrm{H}^{*-s}(\widetilde{\cH}_n, \C) 
 \xrightarrow{\sim}Gr_{d/2+s}^P \mathrm{H}^{*+s}(\widetilde{\cH}_n, \C)$$ 
I note in passing that the equivalence of the 
various conditions of Corollary \ref{cor:equiv} would then imply that the various possible splittings 
of the perverse filtration all agree \cite{dC}. 

The $A_1$ case of the ``P=W'' conjecture was established in \cite[Thm. 1.1.1]{dCHM}.  In broad outline,
their proof proceeds as follows. First, they showed
that the tautological classes $\alpha_k, \beta_k, \phi_{k,i} \in \mathrm{H}^*(\cH_n, \Q)$ 
occupy the same place in the perverse filtration that they do in the weight filtration.  Second, 
they showed the cup product respects the perverse filtration.  To generalize this approach
to higher rank, the first step is to compute the weights and perverse degrees of the tautological
classes.  In this article we have computed the weights.

\subsection*{Appendix: review of simplicial terminology}

The above constitutes a complete argument, but for those still seeking homotopical enlightenment, 
we recall the meanings of some of the words.  Everything which follows is standard material, for which there 
are many references; we found \cite{C} especially helpful.

We write $\Delta$ for the simplex category -- its objects are the nonempty finite totally ordered sets, and its morphisms 
are order preserving maps.  Evidently the objects are each uniquely isomorphic to some 
$[n] = [0 \to 1 \to \cdots \to n]$.  The maps are generated by the $n + 1$ ``include a face'' maps $[n-1] \to [n]$ and the
$n$ ``degenerate an edge'' maps $[n] \to [n-1]$, subject to some relations.   That is, the category looks like this: 

$$ [0]   \mathrel{\vcenter{\offinterlineskip%
  \hbox{$\rightarrow$}\vspace{.1ex}\hbox{$\leftarrow$}\vspace{.1ex}\hbox{$\rightarrow$}}}
[1]  \mathrel{\vcenter{\offinterlineskip%
  \hbox{$\rightarrow$}\vspace{.1ex}\hbox{$\leftarrow$}\vspace{.1ex}\hbox{$\rightarrow$}\vspace{.1ex}\hbox{$\leftarrow$}\vspace{.1ex}\hbox{$\rightarrow$}  }} [2] \cdots$$

A ``simplicial object'' in a category $\cC$ is just 
a functor $\Delta^{op} \to \cC$.  That is, it looks like this:

$$ 
K_0   \mathrel{\vcenter{\offinterlineskip%
  \hbox{$\leftarrow$}\vspace{.1ex}\hbox{$\rightarrow$}\vspace{.1ex}\hbox{$\leftarrow$}}}
K_1  \mathrel{\vcenter{\offinterlineskip%
  \hbox{$\leftarrow$}\vspace{.1ex}\hbox{$\rightarrow$}\vspace{.1ex}\hbox{$\leftarrow$}\vspace{.1ex}\hbox{$\rightarrow$}\vspace{.1ex}\hbox{$\leftarrow$} }} K_2 \cdots$$

The example which most concerns us here is the simplicial set (or space, or scheme) $BG$.  

$$ BG: 
pt  \mathrel{\vcenter{\offinterlineskip%
  \hbox{$\leftarrow$}\vspace{.1ex}\hbox{$\rightarrow$}\vspace{.1ex}\hbox{$\leftarrow$}}}
G  \mathrel{\vcenter{\offinterlineskip%
  \hbox{$\leftarrow$}\vspace{.1ex}\hbox{$\rightarrow$}\vspace{.1ex}\hbox{$\leftarrow$}\vspace{.1ex}\hbox{$\rightarrow$}\vspace{.1ex}\hbox{$\leftarrow$} }} G \times G \cdots$$

That is, $BG_n = G^n$, the left-going maps are given by inclusion into a factor, and the right-going maps are projection or multiplication.  For instance, the two maps $G \to G \times G$ are
$g \mapsto (g, 1)$ and $g \mapsto (1,g)$, and the three maps $G \times G \to G$ are $(g, h) \mapsto g$, $(g, h) \mapsto gh$, and $(g, h) \mapsto h$. 

A Cech cover $\{U_\alpha\}$ of a space $X$ determines a simplicial set $\mathfrak{U}_X$ by taking 
$$\mathfrak{U}_{X,i} = \{(\alpha_1, \alpha_2, \ldots, \alpha_n)\, | \, \bigcap U_{\alpha_i} \ne \emptyset \}$$ 
The face and degeneracy maps are given by omitting and doubling indices.

A map of simplicial sets $E: \mathfrak{U}_X \to BG$, in degree one, corresponds to a specification of an element of $G$ for every double
overlap $U_\alpha \cap U_\beta$.  When $\alpha = \beta$, this element must be the identity, for compatibility with the degeneracy map
from degree zero.  In degree two, we should give an element $(g, h) \in G \times G$ for each triple overlap $U_\alpha \cap U_\beta \cap U_\gamma$.  
The face maps assert that, in this case, $g$ should be the element assigned to $U_\alpha \cap U_\beta$, that $h$ should be the element assigned
to $U_\beta \cap U_\gamma$, and $gh$ should be the element assigned to $U_\alpha \cap U_\gamma$.  It turns out that associativity of $G$ then
determines all higher morphisms.  In other words, such a map determines a locally trivial $G$-bundle on $X$, trivialized on each of the $U_\alpha$. 
If each $U_\alpha$ is small enough that all locally trivial $G$-bundles are trivial, this gives a bijection: 
$$\Hom_{ssets}( \mathfrak{U}_X ,  BG ) \leftrightarrow \{\mbox{locally trivial $G$-bundles on $X$, trivialized on each of the $U_\alpha$}\}$$

The stack of local systems parameterizes bundles without the data of trivializations on the charts.  
In the simplicial setting, this corresponds to promoting the 
LHS from the ordinary Hom, which is a set, to the internal Hom, which is a simplicial set.  
To describe this, let $\Delta_n$ be the ``simplicial $n$-simplex'', i.e., the functor on $\Delta$ given by 
$$\Delta_n ( \,\cdot\, ) = \Hom_{\Delta}(\,\cdot \, , [n])$$

By Yoneda, this object has the feature that $\Hom_{ssets}(\Delta_n, X) = X([n]) = X_n$.  This leads to the definition of a simplicial
set $\underline{\Hom}_{ssets}(X, Y)$ whose $n$-simplices are 
$$\underline{\Hom}_{ssets}(X, Y)_n = \Hom_{ssets}(\Delta_n, \underline{\Hom}_{ssets}(X, Y)) = 
\Hom_{ssets}(\Delta_n \times X, Y)$$

The product of simplicial sets is the usual product of functors, i.e., $(X \times Y)_n = X_n \times Y_n$, and similarly on maps. Chasing a 
(fairly large) diagram shows that 
$\Hom_{ssets}(\mathfrak{U}_X ,  BG)_1$ is the set of triples $(\cE, \cF, \phi)$ where $\cE, \cF \in \Hom_{ssets}(\mathfrak{U}_X ,  BG)_1$, and $\phi$ 
is a map $\mathfrak{U}_{X,1} \to G$ so that changing the trivialization accordingly carries $\cE$ to $\cF$.  
The two maps to $\Hom_{ssets}(\mathfrak{U}_X ,  BG)_0$ are just the restrictions to $\cE$ and $\cF$.  That is, 
$$ Loc_G(X): \Hom_{ssets}( \mathfrak{U}_X ,  BG )_0 \leftleftarrows \Hom_{ssets}( \mathfrak{U}_X ,  BG )_1$$ 
is a usual presentation of the groupoid of local systems.  

If we view $\mathfrak{U}_X$ as a simplicial scheme by
just viewing each element of $\mathfrak{U}_{X,i}$ as a copy of $\mathrm{Spec}\, \C$, 
we find the usual presentation of the (1-)stack of local systems:
$$Loc_G(X):  \Hom_{sschemes}( \mathfrak{U}_X ,  BG )_0 \leftleftarrows \Hom_{sschemes}( \mathfrak{U}_X ,  BG )_1$$ 

In fact, $\underline{\Hom}_{sschemes}(\Delta_X, BG)$ has vanishing higher homotopy groups, 
so mapping to the 1-truncation gives $$\underline{\Hom}_{sschemes}(\Delta_X, BG) \xrightarrow{\sim} Loc_G(X)$$  
From a modern point of view this is 
rather the {\em definition} of $Loc_G(X)$, and then a calculation of homotopy groups reveals that it is a 1-stack rather
than a higher stack.  For a derived version of this (not needed here, but which specializes to the underived calculation), 
see \cite[Lemma 2.2.6.3]{HAG2}. 

\subsection*{Acknowledgements}  I would like to thank Mark de Cataldo, Tam\'as Hausel, and Luca Migliorini for helpful and encouraging 
conversations around these topics, Ben Davison for discussions about simplicial technology and finding 
\cite[Lemma 2.2.6.3]{HAG2}, and Geordie Williamson for correcting an egregiously false belief I had about mixed Hodge theory. 
I am partially supported by the NSF grant DMS-1406871.

\end{document}